\newtheorem{thm}{Theorem}[section]
\newtheorem{lem}[thm]{Lemma}
\newtheorem{conj}[thm]{Conjecture}
\theoremstyle{definition}
\newtheorem{dfn}[thm]{Definition}
\newtheorem*{prb*}{Problem}
\theoremstyle{remark}
\newtheorem{rmk}[thm]{Remark}
\numberwithin{equation}{section}
\providecommand{\al}{\alpha}
\providecommand{\be}{\beta}
\providecommand{\e}{\varepsilon}
\providecommand{\emp}{\varnothing}
\providecommand{\N}{\mathbb{N}}
\providecommand{\R}{\mathbb{R}}
\newcommand{\conv}{\mathrm{conv}\thinspace}
\title{A non-linear Roth theorem for thick Cantor sets}
\author{Alex McDonald}
\address{Mathematics Department, Kennesaw State University, Marietta, GA}
\email{amcdon79@kennesaw.edu}
\author{Micah Nguyen}
\address{Mathematics Department, Kennesaw State University, Marietta, GA}
\email{mnguye94@students.kennesaw.edu}
\date{}
\begin{document}

\begin{abstract}
We prove that for any function $f$ satisfying certain mild conditions and any Cantor set $K$ with Newhouse thickness greater than $1$, there exists $x\in K$ and $t>0$ such that
\[
\{x-t,x,x+f(t)\}\subset K.
\]
This is an extension of previous work on the existence of three-term arithmetic progressions in Cantor sets to the non-linear setting.
\end{abstract}

\maketitle

\section{Introduction}
Many deep results in mathematics assert that if a set is sufficiently large, then it cannot avoid exhibiting some type of structure.  A classic example is Roth's Theorem \cite{Roth} in additive combinatorics, which says that sets of integers with positive upper density must contain three-term arithmetic progressions.  This means that if $E\subset \N$ satisfies
\begin{equation}
\label{positiveupperdensity}
    \limsup_{N\to\infty}\frac{|E\cap[1,N]|}{N}>0,
\end{equation}
then there exist $x\in E,t>0$ such that
\[
\{x-t,x,x+t\}\subset E.
\]
In analysis, the Falconer distance problem is an important variant of the ``largeness implies structure'' theme.  The problem asks how large the Hausdorff dimension of a set $E\subset \R^d$ must be to ensure that the distance set
\[
\Delta(E):=\{|x-y|:x,y\in E\}
\]
has positive Lebesgue measure.  This problem is open, with the best known results due to Guth, Iosevich, Ou, and Wang \cite{GIOW} in the plane and Du, Ou, Ren, Zhang \cite{DORZ} in higher dimensions.  More generally, it is an active area of current research to determine when sets of sufficiently large Hausdorff dimension are forced to contain certain types of finite point configurations.  While there are many positive results in this area, the analogue of Roth's Theorem is actually negative; for any dimension $d\in \N$, there is a set $E\subset \R^d$ with Hausdorff dimension $d$ which does not contain any three term arithmetic progression.  This is a special case of a well-known result of Maga \cite{Maga}, which states that for $d\in\N$ there is a set $E\subset \R^d$ of Hausdorff dimension $d$ which does not contain any set $\{x_1,x_2,x_3,x_4\}$ which has at least $3$ points distinct and satisfies $x_2-x_1=x_4-x_3$.  Any 3-AP $\{x-t,x,x+t\}$ with $t>0$ qualifies, as we may take $x_1=x-t,x_2=x_3=x,x_4=x+t$.

In recent years, there has been considerable interest in studying the existence of point configurations in Cantor sets under the assumption of large ``Newhouse thickness'', rather than large Hausdorff dimension.  Intuitively, suppose we construct a Cantor set the usual way, by starting with an interval and removing some open interval from the interior, and continue for countably many stages.  Unlike the middle-thirds Cantor set, we do not require the interval we remove to be a particular proportion of the lenght of the interval it is removed from, nor do we require it to be in the middle.  We also do not assume self-similarity; that is, we do not require that the ``same'' open interval is removed at each stage.  The thickness is defined by considering the ratio of what is left behind to what is removed at each stage, and taking the infimum of all such ratios.  The precise definitions are as follow.
\begin{dfn}
A \textbf{Cantor set} is a subset $K\subset\R^d$ which is non-empty, compact, perfect, and totally disconnected.
\end{dfn}
It is not hard to see that Cantor sets are precisely those which can be constructed in the process described in the previous paragraph.  For the remainder of the paper, we will restrict ourselves to the one-dimensional setting.
\begin{dfn}
\label{thickness}
If $K\subset\R$ is a Cantor set, then a \textbf{gap} of $K$ is a connected component of $\R\setminus K$.  If $u$ is the right endpoint of a gap $G$, the \textbf{bridge} at $u$ is the maximal interval $B=[u,v]$ with the property that for any gap $G'\subset B$, we have $|G'|\leq |G|$.  Bridges at left endpoints are defined similarly.  For any gap endpoint $u$ (right or left), define
\[
\tau(K,u)=\frac{|B|}{|G|}.
\]
The \textbf{Newhouse thickness} (or simply ``thickness'') of $K$ is
\[
\tau(K)=\inf_u \tau(K,u),
\]
the infimum being taken over all gap endpoints $u$.
\end{dfn}
Many point configuration problems for sets of large Hausdorff dimension are also interesting for sets of large thickness.  Simon and Taylor \cite{ST20} proved that if $E\subset \R^2$ is of the form $E=K_1\times K_2$ for Cantor sets $K_1,K_2$ satisfying the thickness condition $\tau(K_1)\tau(K_2)>1$, then the corresponding distance set has positive measure, solving an analogue of the Falconer problem in this context.  In fact, they prove the stronger result that for any $x\in\R^2$ the ``pinned'' distance set 
\[
\Delta_x(K_1\times K_2):=\{|x-y|:y\in K_1\times K_2\}
\]
contains a non-degenerate interval.  This was generalized by Taylor and the first author \cite{MT21, MT22} to show that products of thick Cantor sets contain certain types of tree structures.

In another direction, Yavicoli \cite{Y21} shows that any Cantor set of thickness at least $\tau$ contains a similar copy of any set with at most $N(\tau)$ points, where $N(\tau)$ is an explicit function.  This is obviously very flexible in terms of the patterns it allows one to find, but the function $N(\tau)$ grows quite slowly; to find a three point configuration using this theorem (i.e., to have $N(\tau)\geq 3$) one needs to take $\tau$ on the order of $10^9$.  More recently, Yavicoli \cite{YSurvey} gave a simple proof showing that a Cantor set $K$ satisfying $\tau(K)\geq 1$ must contain a 3-AP.  This was generalized by Sandberg-Clark and Taylor \cite{SCT}, who show that any Cantor set $K$ satisfying $\tau(K)\geq 1$and any $\lambda\in (0,1)$, $K$ must contain a configuration of the form $\{x,(1-\lambda)x+\lambda y,y\}$ for some $x\neq y$ (3-APs correspond to the case $\lambda=1/2$).  They also generalize this result to higher dimensions, using a notion of thickness introduced by Yavicoli in \cite{Y22}.

For a function $f$, we may ask what assumptions guarantee that a set contains configurations of the form $\{x-t,x,x+f(t)\}$ (3-APs correspond to the case $f(t)=t$).  If $f$ is non-linear, this problem is not invariant under scaling, so the results discussed above which pertain to sets containing ``similar copies'' of a given configuration do not apply.  In the discrete setting, Bergelson and Leibman \cite{BL96} proved that if $P_1,\dots,P_n$ are any polynomials with integer coefficients and constant term zero, and $E\subset \N$ satisfies (\ref{positiveupperdensity}), then there exists $t\neq 0$ such that
\begin{equation}
\label{BLequation}
    \{x,x+P_1(t),\dots,x+P_n(t)\}\subset E.
\end{equation}
In the continuous setting, Krause \cite{krause25} showed that if $\{P_1,\dots, P_n\}$ is a collection of polynomials satisfying a certain technical assumption, and $E\subset \R$ has $s$-dimensional Hausdorff content bounded away from zero for some $s$ sufficiently large, then there exists $t\neq 0$ such that (\ref{BLequation}) holds.

The purpose of this paper is to prove an analogous result for thick Cantor sets.  Our result does not require the function to be a polynomial.  However, we do impose bounds on the derivative which rule out some of the examples which are allowed in the aforementioned results.
\begin{thm}
\label{main}
Let $K\subset \R$ be a Cantor set with thickness $\tau>1$, and let $f:\R\to \R$ be a continuously differentiable function satisfying $f(0)=0$ and $\frac{1}{\tau}<f'(0)<\tau$.  Then, there exist $x\in \R,t> 0$ such that
\[
\{x-t,x,x+f(t)\}\subset K.
\]
\end{thm}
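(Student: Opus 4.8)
The proof rests on the Newhouse Gap Lemma: if $C_1,C_2\subset\R$ are Cantor sets whose convex hulls meet and such that neither $C_i$ is contained in a single gap of the other (one says $C_1$ and $C_2$ are \emph{linked}), and if $\tau(C_1)\tau(C_2)>1$, then $C_1\cap C_2\neq\varnothing$. The plan is to realise a configuration $\{x-t,x,x+f(t)\}$ inside $K$ as a point of the intersection of $K$ with a suitable nearly-affine image of a piece of $K$, and then to arrange the geometry so that the Gap Lemma applies.

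First I would reduce to a single small scale. Since $f$ is $C^1$ and $f(0)=0$, for every $\varepsilon>0$ there is $\delta>0$ with $|f'(s)-f'(0)|<\varepsilon$ for $|s|<\delta$; in particular $f$ restricted to $(0,\delta)$ is the map $s\mapsto f'(0)s$ up to a multiplicative distortion as close to $1$ as we please. Because a Cantor set has gaps, and hence bridges, at arbitrarily small scales, I would fix a gap $G=(u,v)$ of $K$ so small that the entire argument takes place inside an interval of length $<\delta$, so that $f$ is everywhere evaluated where it is indistinguishable (for our purposes) from its linearisation $f'(0)\,\cdot$.

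Next, fix $x:=v$, the right endpoint of $G$, as the middle term. Since $(u,v)$ is a gap, every $p\in K$ with $p<v$ in fact satisfies $p\le u$, so $t:=v-p\ge|G|>0$ is automatically bounded away from $0$ and no degenerate ($t=0$) solutions can occur. The third term is $r=v+f(v-p)$, and as $p$ runs over a sub-Cantor-set $\widetilde K\subseteq K\cap(-\infty,u]$ chosen inside a bridge (so that $\tau(\widetilde K)\ge\tau$ and $\widetilde K$ lies within $O(|G|)$ of $u$), the value $r$ runs over the Cantor set
\[
R\;:=\;v+f\bigl(v-\widetilde K\bigr),
\]
a $C^1$-image of $\widetilde K$ whose derivative is within $\varepsilon$ of the constant $-f'(0)$; hence $R$ is a Cantor set with $\tau(R)\ge\tau-\varepsilon'$, where $\varepsilon'\to0$ as $\varepsilon\to0$. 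A point of $R\cap K$ (necessarily distinct from $v$, since $R\subset(v,\infty)$) is precisely a configuration $\{v-t,v,v+f(t)\}\subset K$. I would then apply the Gap Lemma to $R$ and to the piece $K_v:=K\cap[v,w]$ lying just to the right of $v$, where $[v,w]$ is the portion of the bridge at $v$ up to (but not past) the first gap longer than $|G|$; the bridge at $v$ has length $\ge\tau|G|$, so $w-v$ is of order $|G|$, and this choice keeps $\tau(K_v)\ge\tau$ (possibly after a further harmless truncation). For $\varepsilon'$ small we have $\tau(R)\tau(K_v)>1$, so everything comes down to checking that $R$ and $K_v$ are linked.

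Linking is the crux, and it is exactly here that the hypothesis on $f'(0)$ enters. The set $R$ begins at $v+f(|G|)\approx v+f'(0)|G|$, while $K_v$ reaches at least to $v+\tau|G|$; thus the upper bound $f'(0)<\tau$ guarantees, with room to absorb the $O(\varepsilon)$ errors, that the convex hulls of $R$ and $K_v$ overlap. The remaining requirements --- that $\conv R$ is not swallowed by a single gap of $K$, and that $[v,w]$ is not swallowed by a gap of $R$ --- are what force the finer bounds $f'(0)>\tau/(\tau+1)$, $f'(0)>1/\tau$ and $f'(0)<1+1/\tau$: the lower bounds make $R$ (equivalently $v-\widetilde K$, equivalently $\widetilde K$) long enough compared with $|G|$ that $\conv R$ cannot fit inside a gap of $K$ short enough to lie in the bridge at $v$, while the upper bounds keep $\conv R$ from overshooting that bridge; the value $\tau=(1+\sqrt5)/2$, at which $\tau/(\tau+1)$ and $1/\tau$ (and, simultaneously, $\tau$ and $1+1/\tau$) trade places, should mark a genuine case split in this verification. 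Once linking holds, the Gap Lemma delivers $r^{\ast}\in R\cap K$; unwinding the definition of $R$ gives $p^{\ast}\in K$ with $v+f(v-p^{\ast})=r^{\ast}$, and then $x=v$, $t=v-p^{\ast}\ge|G|>0$ yields $\{x-t,x,x+f(t)\}=\{p^{\ast},v,r^{\ast}\}\subset K$. The part I expect to be the main obstacle is precisely the combination of (i) the constant-chasing in the linking step, where the displayed bounds on $f'(0)$ must be used sharply and in cases, and (ii) the accompanying bookkeeping showing that passing to bridges and to small scales costs only a loss of thickness tending to $0$, so that the strict inequalities $\tau>1$ and the bounds on $f'(0)$ leave exactly enough slack to invoke the Gap Lemma.
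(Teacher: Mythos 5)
Your overall strategy is the same as the paper's: pass to a sub-Cantor-set at a scale $\delta$ small enough that $f$ is well-approximated by $t\mapsto f'(0)t$, fix the middle point $x$ at an endpoint of a gap $G$, form the transplanted Cantor set (you take $R=v+f(v-\widetilde K)$; the paper equivalently works with $-K_1\cap f^{-1}(K_2)$), verify that it links with the piece on the other side of $G$ using the Mean Value Theorem and the bounds on $f'$, and invoke the Newhouse Gap Lemma. Lemma \ref{smooththickness} and Lemma \ref{subset} do exactly the ``bookkeeping'' you anticipate about thickness being almost preserved under $C^1$ maps and about finding arbitrarily small bridges.

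However, there is a genuine gap in your diagnosis of where the case split comes from, and it would derail the constant-chasing you flag as the main obstacle. Having fixed $x=v$ (the right endpoint of $G$), the linking inequality you need is $a<g(c)<b$ where $a=|G|$, $b=|G|+|B_L|$, $c=|B_R|$, and $g=f^{-1}$ (in your coordinates, $f(a)<c<f(b)$). The upper bound requires roughly $g'(\xi)<b/c=(|G|+|B_L|)/|B_R|$, and the thickness-derived estimate $1+1/\tau\le b/(b-a)$ only closes the gap \emph{if} $b-a\ge c$, i.e.\ if the left bridge of $G$ is at least as long as the right bridge. If instead $|B_R|>|B_L|$, your inequality fails and you must instead take $x=u$ (the other endpoint of $G$); after reflecting, this is the same argument run with $f^{-1}$ in place of $f$. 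That is the true source of the case split, and it is why the theorem's hypothesis involves $\max\!\left(\tfrac{\tau}{\tau+1},\tfrac1\tau\right)$ and $\min\!\left(\tau,1+\tfrac1\tau\right)$: the bound $\tfrac1\tau<\cdot<1+\tfrac1\tau$ must hold simultaneously for $f'(0)$ and for $(f^{-1})'(0)=1/f'(0)$. The golden ratio $\tau=(1+\sqrt5)/2$ is merely the value at which the two terms inside the max (and inside the min) cross over; it does not correspond to any branch in the proof, and a verification organized around a case split on $\tau$ would not go through. To complete your sketch you should (i) take $G$ to be the largest gap of a small subset $K'\subset K$ (so both flanking bridges are genuine bridges and Lemma \ref{bridgethickness} applies), (ii) WLOG reduce to the case $|B_L|\ge|B_R|$ by replacing $K'$ with $-K'$ and $f$ with $f^{-1}$ otherwise, and (iii) carry out the MVT estimate exactly as in Lemma \ref{MVTlemma}.
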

Note that the hypotheses of Theorem \ref{main} include the case $f(0)=0,f'(0)=1$.  Recall that 3-APs correspond to the function $f(t)=t$; hence, the theorem applies to any function which agrees with this one to the first order.  Before proving our result, we conclude the section with a short discussion of the extent to which our hypotheses are necessary.  If $f$ is continuous and $f(0)\neq 0$, then there exists $\e,\delta>0$ for which $f(t)>\e$ whenever $|t|<\delta$.  Suppose $K$ has thickness $\tau>1$, and $K$ has diameter less than both $\e$ and $\delta$.  If $\{x-t,x\}\subset K$, then $|t|<\delta$ and hence $f(t)>\e$, so $x+f(t)\notin K$.  This shows the assumption $f(0)=0$ is necessary.  It is less clear if the bounds on $f'$ can be improved.  Indeed, recall that in the linear setting it is known that $\tau>1$ implies the existance of a configuration $\{a,(1-\lambda)a+\lambda b,b\}$ for any $\lambda\in (0,1)$.  This is the same as $\{x-t,x,x+f(t)\}$ with $t=\lambda(b-a),x=a+\lambda(b-a)$, and $f(t)=\frac{1-\lambda}{\lambda}t$.  In other words, when $f$ is linear, it can have any non-zero slope and the result holds.  It seems reasonable to hope that the hypotheses of Theorem \ref{main} could be relaxed to $f(0)=0,f'(0)\neq 0$.  It would also be interesting to determine what, if anything, could be said in the case $f'(0)=0$.  In Section \ref{Necessity}, we will give a more thorough discussion of the obstacles which arise in trying to generalize our result.
\section{Basic properties of Newhouse thickness}
\label{properties}
The motivation for Definition \ref{thickness} is the following result (due to Newhouse \cite{Newhouse}; see also \cite{PTbook, YSurvey} for expositions) which gives a sufficient condition for sets to intersect.
\begin{thm}[Newhouse Gap Lemma]
\label{NGL}
Let $K_1,K_2\subset \R$ be Cantor sets satisfying $\tau(K_1)\tau(K_2)\geq 1$.  If neither $K_1$ nor $K_2$ is contained in a single gap of the other, then $K_1\cap K_2\neq\emp$.
\end{thm}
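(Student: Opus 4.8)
The plan is to argue by contradiction. Assuming $\tau(K_1)\tau(K_2)\ge 1$, that neither $K_i$ lies in a single gap of the other, and that $K_1\cap K_2=\emp$, I will deduce the contradictory inequality $\tau(K_1)\tau(K_2)<1$. The engine is the notion of a \emph{linked pair}: a pair $(U,V)$ with $U$ a gap of $K_1$ and $V$ a gap of $K_2$ such that $U\cap V$, $U\setminus V$ and $V\setminus U$ are all nonempty, i.e.\ $U$ and $V$ overlap but neither contains the other (so each contains exactly one endpoint of the other). The two things to establish are: (i) under our hypotheses a linked pair must exist; and (ii) no linked pair can exist when $\tau(K_1)\tau(K_2)\ge 1$.

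For (i): since neither $K_i$ is contained in an unbounded gap of the other, $\conv(K_1)\cap\conv(K_2)=:[\alpha,\beta]$ is a nonempty interval; if $\alpha=\beta$ then this point is an endpoint of both convex hulls, hence lies in $K_1\cap K_2$, a contradiction, so $\alpha<\beta$. Relabelling $K_1,K_2$ (and using $K_1\cap K_2=\emp$) we may take $\beta=\max K_2<\max K_1$, and if $\beta\in K_1$ we are done, so $\beta$ lies in a gap $U=(a,b)$ of $K_1$ with $a<\beta<b$. Since $K_1$ meets $\conv(K_2)$ but is not contained in it, a gap endpoint of $K_1$ near $a$ falls inside a gap $V$ of $K_2$, and I will check that $(U,V)$ (after possibly replacing it by a nearby pair) is linked; the degenerate situations where the relevant gap of $K_2$ is unbounded, or where $a$ itself lies outside $\conv(K_2)$, are handled by passing to the other end $\alpha$ and repeating. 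I will also record the finiteness fact that underlies everything below: if $(U,V)$ is linked then the overlap $U\cap V$ is an interval with one endpoint in $K_1$ and the other in $K_2$, so $|U\cap V|\ge\mathrm{dist}(K_1,K_2)=:\delta>0$ (positive by compactness), whence $|U|>\delta$ and $|V|>\delta$; since each $K_i$ sits in a bounded interval, only finitely many of its gaps exceed $\delta$, so there are only finitely many linked pairs.

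For (ii), fix a linked pair $(U,V)$, write $U=(a,b)$, $V=(c,d)$, and after a reflection assume $a<c<b<d$, so $b\in K_1\cap V$ and $c\in K_2\cap U$. Let $B_1=[b,b^{*}]$ be the bridge of $K_1$ at $b$ and $B_2=[c_{*},c]$ the bridge of $K_2$ at $c$. The crucial claim is that the linked pair can be chosen (using (i) and the finiteness above, e.g.\ an ``innermost'' pair) so that neither bridge overshoots the gap of the other set, i.e.\ $b^{*}\le d$ and $c_{*}\ge a$. Granting this, $|B_1|=b^{*}-b\le d-b$ and $|B_2|=c-c_{*}\le c-a$, while by definition of thickness $|B_1|\ge\tau(K_1)(b-a)$ and $|B_2|\ge\tau(K_2)(d-c)$; multiplying,
\[
\tau(K_1)\tau(K_2)\,(b-a)(d-c)\le (d-b)(c-a).
\]
Writing $x=c-a$, $y=b-c$, $z=d-b$ (all positive), a direct computation gives
\[
(b-a)(d-c)-(d-b)(c-a)=(x+y)(y+z)-zx=y(x+y+z)>0,
\]
so $(d-b)(c-a)<(b-a)(d-c)$ and hence $\tau(K_1)\tau(K_2)<1$, the desired contradiction.

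The routine parts are this last algebra and the elementary observations about overlaps and gap lengths. The main obstacle is justifying the non-overshoot claims $b^{*}\le d$ and $c_{*}\ge a$. If instead $b^{*}>d$, then $d$ — which lies in $K_2$ but, since $K_1\cap K_2=\emp$, not in $K_1$ — sits in a gap $U'$ of $K_1$ that is contained in $B_1$ (because the endpoint $b^{*}\in K_1$ cannot lie in the open gap $U'$), forcing $|U'|\le|U|$; moreover $(U',V)$ is again a linked pair, with the gap of $K_1$ no larger than before, and symmetrically on the $K_2$ side. Because there are only finitely many linked pairs, iterating this substitution must terminate, either reaching a pair for which no overshoot occurs or collapsing into a short cyclic configuration that has to be excluded separately using that $K_1$ and $K_2$ are perfect (two distinct gaps of the same Cantor set cannot abut at a common endpoint). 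Making this descent precise — together with the bookkeeping for unbounded gaps and for the configurations at the ends of $\conv(K_1)\cap\conv(K_2)$ — is where the real work lies.
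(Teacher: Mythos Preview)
The paper does not prove this statement. The Newhouse Gap Lemma is quoted as a classical result, attributed to Newhouse with references to expositions in Palis--Takens and in Yavicoli's survey, and is then used as a black box in the rest of the paper. So there is no ``paper's own proof'' to compare against.

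That said, your outline is exactly the standard argument one finds in those references: set up a linked pair of gaps, use the bridges at the interlocking endpoints to bound $\tau(K_1)\tau(K_2)$ by the ratio $(d-b)(c-a)/\big((b-a)(d-c)\big)$, and check via the identity $(x+y)(y+z)-xz=y(x+y+z)$ that this ratio is strictly less than $1$. Your algebra in part~(ii) is correct, and your observation that any linked pair has both gaps of length at least $\mathrm{dist}(K_1,K_2)>0$---hence only finitely many linked pairs exist---is the right finiteness input.

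The part you flag as ``where the real work lies'' is indeed the only delicate point, and your sketch of it is honest but not yet a proof. In the standard treatment one does not iterate freely and appeal to finiteness (which alone does not rule out cycles); instead one selects at the outset a linked pair minimizing, say, $|U|+|V|$ (or $\max(|U|,|V|)$ with a tiebreak), and then shows that an overshoot would produce a strictly smaller linked pair, contradicting minimality. Concretely, if $b^{*}>d$ then your new gap $U'\subset B_1$ satisfies $|U'|\le|U|$, and the new linked pair $(U',V)$ has the roles of left/right reversed; if the corresponding $K_2$-bridge now overshoots as well, the next replacement shrinks $|V|$. Tracking one well-chosen size functional makes the descent terminate cleanly and avoids the ad~hoc exclusion of ``short cyclic configurations'' you allude to. Similarly, part~(i) is cleaner if you argue directly: pick any point of $K_1$ lying in $\conv(K_2)$ (such a point exists since $K_1$ is not in an unbounded gap of $K_2$), observe it lies in a bounded gap $V$ of $K_2$, take an endpoint of $V$ (which lies in $K_2$, hence in some gap $U$ of $K_1$), and verify $(U,V)$ is linked using that $K_2$ is not contained in $U$.
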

If $g(x)=ax+b$ is an affine map, it is easy to see $\tau(g(K))=\tau(K)$.  If $g:\R\to\R$ is continuous and monotone, then for any gap $G$ of $K$, $g(G)$ is a gap of $g(K)$.  However, if $B$ is a bridge of $K$, it is not necessarily true that $g(B)$ is a bridge of $g(K)$.  This causes thickness to behave unpredictably when a Cantor set is transformed by a non-linear map.  However, we have the following result, which says that thickness is ``locally almost preserved'' if we work close to a regular point of the map.
\begin{lem}[\cite{MT22}, Lemma 3.1]
\label{smooththickness}
Let $K\subset \R$ be a Cantor set, and let $I$ be an interval such that $K\cap I$ is a Cantor set with thickness $\tau(K\cap I)\geq \tau(K)$.  For any $0<\rho<1$, there exists $\e>0$ such that if $g$ is continuously differentiable and monotone on $I$ and
\[
\forall x,y\in I,\hspace{.1in}\left|\frac{|g'(x)|}{|g'(y)|}-1\right|<\e,
\]
then $\tau(g(K\cap I))>\rho\tau(K)$.
\end{lem}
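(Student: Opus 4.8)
The plan is to control $\tau(g(K\cap I))$ one bridge at a time. Write $L=K\cap I$; we may assume $\tau(K)>0$. Replacing $g$ by $-g$ if necessary (which reflects $g(L)$ and so does not change its thickness, and preserves the hypothesis since $|(-g)'|=|g'|$), we may assume $g$ is increasing; then $g$ is a homeomorphism of $I$ onto its image, carrying the bounded gaps of $L$ bijectively to the bounded gaps of $g(L)$ and gap endpoints to gap endpoints. Fix $\rho'\in(\rho,1)$; since $\tau(g(L))=\inf_u\tau(g(L),u)$, it suffices to produce $\e>0$ so small that $\tau(g(L),u)\ge\rho'\tau(K)$ for every gap endpoint $u$ of $g(L)$, for then $\tau(g(L))\ge\rho'\tau(K)>\rho\tau(K)$. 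The only analytic input is the mean value theorem: the hypothesis gives $|g'(x)|<(1+\e)|g'(y)|$ for all $x,y\in I$, so with $m:=\inf_I|g'|>0$ (positive since $g'$ cannot vanish once the displayed ratio is finite) one has $m\,|J|\le|g(J)|\le(1+\e)m\,|J|$ for every interval $J\subseteq I$. In other words, $g$ rescales the lengths of all subintervals of $I$ by a common factor, up to the multiplicative error $1+\e$.

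Now I compare bridges. Fix a gap endpoint $u=g(w)$ of $g(L)$ with gap $g(G)$, and suppose $w$ is the right endpoint of $G$ (the left-endpoint case is symmetric). Let $B=[w,v]$ be the bridge of $L$ at $w$, so $|B|/|G|\ge\tau(L)\ge\tau(K)$, and let $B^{*}$ be the bridge of $g(L)$ at $u$. Let $B'=[w,v']\subseteq B$ be the largest subinterval all of whose gaps have length $\le|G|/(1+\e)$. Each such gap maps to a gap of $g(L)$ of length $\le(1+\e)m\cdot|G|/(1+\e)=m|G|\le|g(G)|$, so $g(B')$ contains no gap longer than $|g(G)|$, whence $g(B')\subseteq B^{*}$. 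Therefore
\[
\frac{|B^{*}|}{|g(G)|}\ \ge\ \frac{|g(B')|}{|g(G)|}\ \ge\ \frac{m\,|B'|}{(1+\e)m\,|G|}\ =\ \frac{1}{1+\e}\cdot\frac{|B'|}{|G|},
\]
and the whole problem reduces to a statement with no reference to $g$: that passing from the gap-length threshold $|G|$ to the threshold $|G|/(1+\e)$ shrinks the bridge only slightly, uniformly in $w$ — precisely, $|B'|/|G|\ge(1+\e)\rho'\tau(K)$, which together with the display gives the desired bound.

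If $B'=B$ then $|B'|/|G|=|B|/|G|\ge\tau(K)\ge(1+\e)\rho'\tau(K)$ once $\e$ is small, and we are done. Otherwise $B'$ is cut off by the gap $G_{1}\subseteq B$ nearest to $w$ with $|G|/(1+\e)<|G_{1}|\le|G|$; writing $\ell_{1}$ for the endpoint of $G_{1}$ facing $w$, we have $|B'|=|\ell_{1}-w|$. If $|G_{1}|<|G|$ strictly, then the bridge of $L$ at $\ell_{1}$ taken toward $w$ is blocked by $G$ (since $|G|>|G_{1}|$) and so has length at most $|\ell_{1}-w|$; the thickness hypothesis forces this to be $\ge\tau(K)|G_{1}|>\tfrac{\tau(K)}{1+\e}|G|$, hence $|B'|/|G|\ge\tfrac{\tau(K)}{1+\e}\ge(1+\e)\rho'\tau(K)$ for $\e$ small. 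The one remaining possibility, $|G_{1}|=|G|$ exactly, is the delicate one: two gaps of equal length may lie arbitrarily close together inside a bridge of $L$, and an arbitrarily small nonlinear distortion can make one of them the strictly longer gap, cutting $B^{*}$ down to a tiny interval. Handling this requires genuinely more than the crude length estimates above — for instance a finer analysis of how equal-length ``ties'' in gap length are distributed along a bridge, or an appeal to the particular structure of the admissible intervals $I$ arising in the applications — and this is where I expect the main difficulty to lie; the rest of the argument is the mean-value-theorem bookkeeping carried out above.
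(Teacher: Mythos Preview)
The paper does not prove this lemma; it is quoted from \cite{MT22} without proof, so there is no in-paper argument to compare against.  Your approach---mean value theorem to control length distortion, then a bridge-by-bridge comparison---is the natural one, and your computations in the cases $B'=B$ and $|G_1|<|G|$ are correct, yielding $\tau(g(L),u)\ge\tau(K)/(1+\e)^2$ and hence the universal choice $\e=\rho^{-1/2}-1$.

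The tie case $|G_1|=|G|$ that you flag is a genuine obstruction, but it is an artifact of the bridge convention in Definition~1.2, which allows gaps of size \emph{equal} to $|G|$ inside the bridge of $G$.  With that convention the lemma is in fact false: take $L$ to contain two gaps $G,G_1$ of exactly the same length separated by a piece of $L$ of length $\delta\ll|G|$ (with the rest of $L$ arranged so that all other bridge-to-gap ratios are large).  Under Definition~1.2 one computes $\tau(L)\ge 1$ independently of $\delta$, yet an arbitrarily $C^1$-small increasing perturbation $g$ can make $|g(G_1)|>|g(G)|$, which cuts the right bridge of $g(G)$ down to length $\approx m\delta$ and forces $\tau(g(L))\approx\delta/|G|$.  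No choice of $\e>0$ prevents this.

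The resolution is to use the standard Palis--Takens convention in which the bridge of $G$ contains only gaps \emph{strictly} smaller than $|G|$ (equivalently, the gap-ordering definition).  Under that convention any $G_1\subset B$ automatically satisfies $|G_1|<|G|$, so your ``tie'' case never occurs, and your argument for the strict case goes through verbatim and finishes the proof.  In short: the gap you identified lives in the definition, not in your method; switching to the strict bridge convention makes your proof complete.
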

The condition $\tau(K\cap I)\geq \tau(K)$ does not necessarily hold for all intervals $I$.  For example, if $K$ is the middle-thirds Cantor set and $I=[\frac{2}{9},\frac{7}{9}]$, then $\tau(K)=1$ but $\tau(K\cap I)=\frac{1}{3}$.  However, it is enough to ensure that the interval is a bridge, as the following lemma shows.
\begin{lem}
\label{bridgethickness}
Let $K\subset \R$, and let $B$ be some bridge of $K$.  Then,
\[
\tau(K\cap B)\geq \tau(K).
\]
\end{lem}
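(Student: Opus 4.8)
The plan is to establish the pointwise refinement that for every gap endpoint $w$ of $K':=K\cap B$ one has $\tau(K',w)=\tau(K,w)$; since $\tau(K,w)\geq\tau(K)$ by definition, taking the infimum over $w$ then yields $\tau(K')\geq\tau(K)$. Write $B=[u,v]$ and assume $u$ is the right endpoint of a gap $G$ of $K$; the case in which $u$ is a left endpoint is the mirror image. Both $u$ and $v$ lie in $K$, and because $K'\subseteq B$ we have $u=\min K'$ and $v=\max K'$, so $B$ is the convex hull of $K'$. A routine argument with connected components shows that the gaps of $K'$ are precisely the gaps of $K$ contained in the open interval $(u,v)$, and that each of them has both endpoints strictly inside $(u,v)$ — otherwise $u$ or $v$ would be an isolated point of $K$. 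Consequently, if $w$ is any gap endpoint of $K'$ and $G'$ is the gap of $K'$ at $w$, then $u<w<v$ and $G'$ is a gap of $K$ with $|G'|<|G|$, since the bridge $B$ at $u$ contains no gap as large as $G$.

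The core of the argument is to compare the bridge of $K$ at $w$ with the bridge of $K'$ at $w$, and to show the former is contained in $B$. Suppose first $w$ is the right endpoint of $G'$, so both bridges extend rightward; let $[w,z]$ be the bridge of $K$ at $w$. If $z>v$, then every gap of $K$ contained in $[u,z]=[u,w]\cup[w,z]$ is smaller than $|G|$: since $w\in K$, any such gap lies in $[u,w]\subseteq B$ or in $[w,z]$, hence has length less than $|G|$ or less than $|G'|<|G|$ respectively. But then $[u,z]$ is an interval with left endpoint $u$ containing no gap as large as $G$ and strictly larger than $B$, contradicting the maximality in the definition of $B$. Hence $z\leq v$ and $[w,z]\subseteq B$; since the gaps of $K'$ coincide with those of $K$ inside $B$, the maximal interval defining the bridge of $K'$ at $w$ is again $[w,z]$, so $\tau(K',w)=|[w,z]|/|G'|=\tau(K,w)$. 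Now suppose $w$ is the left endpoint of $G'$, so both bridges extend leftward; write $[s,w]$ for the bridge of $K$ at $w$. If $s<u$, then $s\in K$ (it is a gap endpoint of $K$ or $\min K$), so $s$ cannot lie inside the gap $G$, which abuts $u$ on the left; thus $s$ lies at or to the left of the left endpoint of $G$, and therefore $G\subseteq[s,u]\subseteq[s,w]$. But then the bridge $[s,w]$ of $K$ at $w$ contains $G$, forcing $|G|<|G'|$, whereas $G'\subseteq B$ forces $|G'|<|G|$ — a contradiction. Hence $s\geq u$, $[s,w]\subseteq B$, and as before the bridge of $K'$ at $w$ equals $[s,w]$, giving $\tau(K',w)=\tau(K,w)$. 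Passing to the infimum over all gap endpoints $w$ of $K'$ finishes the proof.

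The step I expect to require real care is the left-endpoint case. On the ``outer'' side of $K'$ there is simply no room for the bridge of $K$ at $w$ to escape $B$ without enlarging $B$, but on the side facing the gap $G$ nothing a priori prevents the bridge of $K$ at $w$ from running past $u$; it is precisely the defining property of a bridge — that the large gap $G$ sitting just outside $B$ is bigger than every gap inside $B$, and in particular bigger than $G'$ — that forbids this. The remainder is bookkeeping: identifying the convex hull and the gaps of $K'$, checking that $K'$ is indeed a Cantor set, and keeping in mind that the bridges of $K'$ are measured inside its own convex hull $[u,v]$.
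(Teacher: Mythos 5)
Your approach is essentially the same as the paper's: show that the bridge of $K' = K\cap B$ at each gap endpoint $w$ coincides with the bridge of $K$ at $w$, so that $\tau(K',w)=\tau(K,w)\geq\tau(K)$. The paper organizes this into two cases according to whether the bridge of $K'$ lies in the interior of $B$ or reaches its boundary, whereas you argue directly by contradiction in each direction; the analyses cover the same ground, and your right-endpoint argument is sound because the contradiction there comes from the maximality clause in the definition of $B$.

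There is, however, a gap in your left-endpoint case, and it is latent in the paper's proof as well. You write that $G\subseteq[s,w]$ forces $|G|<|G'|$ whereas $G'\subseteq B$ forces $|G'|<|G|$, giving a contradiction. But Definition~\ref{thickness} provides only \emph{non-strict} inequalities: a gap contained in a bridge has length $\leq$ the defining gap's length, not $<$. So the two bounds yield only $|G|=|G'|$, which is no contradiction, and in that degenerate case the bridge of $K$ at $w$ genuinely may cross $G$ and run past $u$, making $\tau(K,w)>\tau(K',w)$ and breaking the claimed pointwise equality. The earlier assertion that ``$G'$ is a gap of $K$ with $|G'|<|G|$, since the bridge $B$ at $u$ contains no gap as large as $G$'' is already not justified by Definition~\ref{thickness} as written. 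The paper buries the same issue inside ``the same argument applies to the left bridges'': its Case~II on the far side of $B$ works because maximality of $B$ supplies the strict inequality $|G''|>|G|$ for the gap just beyond $v$, but on the near side the gap adjacent to $u$ is $G$ itself, for which only $|G|\geq|G'|$ is available. One can build a compact set with two gaps of equal length for which the conclusion fails under Definition~\ref{thickness} as literally stated; the lemma and both arguments become correct if the bridge condition is taken to be $|G'|<|G|$, which matches the usual sequential formulation of thickness.
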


\begin{figure}
\centering
\begin{minipage}[b]{0.45\linewidth}
\begin{center}
\begin{tikzpicture}
    \draw[ultra thick] (0,0)--(6,0);

   
    \draw (1,-0.1) -- (1,0.1);
    \draw (2,-0.1) -- (2,0.1);
    \draw (5,-0.1) -- (5,0.1);
    \draw (2.5,-0.1) -- (2.5,0.1);
    \draw (3,-0.1) -- (3,0.1);
    \draw (4,-0.1) -- (4,0.1);
    

    \draw [decorate,
    decoration = {brace}] (2,-.5) --  (1,-.5);
    \draw [decorate,
    decoration = {brace}] (5,-.5) --  (2,-.5);
    \draw [decorate,
    decoration = {brace}] (2.5,.5) --  (3,.5);
    \draw [decorate,
    decoration = {brace}] (3,.5) --  (4,.5);

   
    \node[below] at (1.5,-0.7) {$G$};
    \node[below] at (3.5,-0.7) {$B$};
    \node[above] at (2.75,0.7) {$G'$};
    \node[above] at (3.5,0.7) {$B'$};
    
\end{tikzpicture}
\end{center}
\caption{An ``interior'' bridge of $K\cap B$}
\label{CaseI}
\end{minipage}
\qquad
\begin{minipage}[b]{0.45\linewidth}
\begin{center}
\begin{tikzpicture}
    \draw[ultra thick] (0,0)--(6,0);

   
    \draw (1,-0.1) -- (1,0.1);
    \draw (2,-0.1) -- (2,0.1);
    \draw (5,-0.1) -- (5,0.1);
    \draw (3.5,-0.1) -- (3.5,0.1);
    \draw (4,-0.1) -- (4,0.1);


    \draw [decorate,
    decoration = {brace}] (2,-.5) --  (1,-.5);
    \draw [decorate,
    decoration = {brace}] (5,-.5) --  (2,-.5);
    \draw [decorate,
    decoration = {brace}] (3.5,.5) --  (4,.5);
    \draw [decorate,
    decoration = {brace}] (4,.5) --  (5,.5);

   
    \node[below] at (1.5,-0.7) {$G$};
    \node[below] at (3.5,-0.7) {$B$};
    \node[above] at (3.75,0.7) {$G'$};
    \node[above] at (4.5,0.7) {$B'$};
    
\end{tikzpicture}
\end{center}
\caption{An ``endpoint'' bridge of $K\cap B$}
\label{CaseII}
\end{minipage}
\end{figure}

\begin{proof}
Consider some bridge $B$ of the cantor set $K$. Without loss of generality, let $B$ be the right bridge of some gap $G$. Let $G'$ be a bounded gap of $K\cap B$; this means $G'\subset B$, and hence $G'$ is also a gap of $K$.  We are going to show that if $B'$ is the left bridge of $G'$ with respect to the set $K\cap B$, then it is also the left bridge of $G'$ with respect to the set $K$.  From this, it follows that
\[
\frac{|B|}{|G|}\geq \tau(K).
\]
The same argument applies to the left bridges, hence $\tau(K\cap B)\geq \tau(K)$.  To prove the claim, we consider two cases: either $\max B'>\max K\cap B$ (Figure \ref{CaseI}), or $\max B'=\max K\cap B$ (Figure \ref{CaseII}).  In the first case, the gap $G''$ of $K\cap B$ which lies to the right of $B'$ is contained in $B$, and thus is a gap of $K$.  Similarly, all gaps of $K\cap B$ contained in $B'$ are contained in $B$, and thus also gaps of $K$.  It follows that $B'$ is the bridge of $G'$ with respect to $K$ also.  In the second case, the gap of $K\cap B$ lying to the right of $B'$ is the unbounded gap $(\max K\cap B,\infty)$.
Let $G''$ denote the gap of $K$ which lies to the right of $B'$.  If this is unbounded also (i.e., if $\max B'=\max K$), we are done.  Otherwise, we observe that since $B$ is a bridge of $K$ corresponding to $G$ and $G'\subset B$, we have $|G''|\geq |G|\geq |G'|$.  Any gap of $K$ contained in $B'$ must be smaller than $G'$, hence smaller than $G''$.  Therefore, in this case also, $B'$ is the bridge of $G'$ defined with respect to $K$.
\end{proof}
\begin{lem}
\label{subset}
Let $K\subset \R$ be a Cantor set.  For any $\delta>0$, there exists $K'\subset K$ such that $\tau(K')\geq \tau(K)$ and $|\conv K'|<\delta$.
\end{lem}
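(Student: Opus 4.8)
The plan is to reduce the lemma to a single geometric claim: \emph{for every $\e>0$, the Cantor set $K$ has a bridge $B$ with $|B|<\e$.} Granting this, one simply takes $\e=\delta$ and sets $K':=K\cap B$. Since $B$ is a bridge, Lemma~\ref{bridgethickness} gives $\tau(K\cap B)\ge\tau(K)$, and since $\conv K'\subseteq B$ we get $|\conv K'|\le|B|<\delta$. (One should also note in passing that $K'=K\cap B$ is again a Cantor set, so that $\tau(K')$ is meaningful; this is part of the routine checking discussed below.)

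To produce a short bridge I would work near the right endpoint $b:=\max K$ of $\conv K$. Because $K$ is perfect and contained in $(-\infty,b]$, its points accumulate at $b$ from the left, so there is a point $b'\in K$ with $b-\e<b'<b$. Because $K$ is totally disconnected it contains no nondegenerate interval, so $(b',b)\setminus K$ is a nonempty open set; let $(c,q)$ be one of its connected components. Both endpoints $c,q$ lie in $K$: for the left endpoint, $c$ is either $b'\in K$ or an accumulation point of $K$ in $(b',b)$; for the right endpoint, if we had $q=b$ then $(c,b)$ would be a left neighbourhood of $b$ disjoint from $K$, contradicting the accumulation of $K$ at $b$ from the left, so $q<b$ and $q\in K$. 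Since neither endpoint can be pushed outward, $(c,q)$ is in fact a bounded gap $G$ of $K$, and $b-\e<b'\le c<q<b$. Now let $B=[q,v]$ be the right bridge of $K$ at the point $q$. The endpoint $v$ of a bridge lies in $K$, so $v\le b$, and therefore $|B|=v-q\le b-q<\e$. This establishes the claim, and with it the lemma.

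I expect the only genuine content to be the short-bridge claim just sketched; the remaining points are routine. The two verifications needed are: (i) that the component $(c,q)$ of $(b',b)\setminus K$ is a genuine gap of $K$, i.e.\ a component of $\R\setminus K$ rather than merely of $(b',b)\setminus K$ --- this is immediate once one observes both of its endpoints already lie in $K$, so the gap cannot be enlarged; and (ii) that $K\cap B$ is perfect, and hence a Cantor set. For (ii) the point is that $B$ is nondegenerate (the gaps of $K$ immediately to the right of $q$ lie in arbitrarily short intervals near $q$, hence are eventually shorter than $|G|$, so the bridge does not collapse to the point $q$), together with the fact that $q$ is approached by points of $K$ lying in $B$; perfectness of $K\cap B$ then follows from perfectness of $K$ in the usual way. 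Neither step presents any real difficulty.
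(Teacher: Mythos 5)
Your proof is correct and takes essentially the same approach as the paper: locate a small gap near a limit point of $K$, observe that the bridge adjacent to it must then also be short, and apply Lemma~\ref{bridgethickness} to $K\cap B$. The paper anchors near the right endpoint of an arbitrary gap $G_0$ and takes the left bridge of a smaller gap found just to its right, whereas you anchor near $\max K$ and take the right bridge of a gap found just to its left; the underlying idea is identical, and you correctly invoke Lemma~\ref{bridgethickness}, whereas the paper's final citation to Lemma~\ref{smooththickness} appears to be a typo for that lemma.
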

\begin{proof}
Let $G_0$ be some gap in $K$ with right endpoint $u$. Since $K$ is closed, $u$ is a limit point, and there exists some $v\in K$ such that $u<v<u+\min(|G_0|,\delta)$.  Then, since $K$ is totally disconnected, there exists a gap $|G|<|G_0|$ such that $w$ is the left endpoint of $G$ and $u<w<v$. Then the left bridge $B$ of $G$ exists and satisfies $B\subset [u,w]$, which implies $|B|<\delta$.  Finally, let $K'=K\cap B$. Then $|\conv K'|<\delta$, and by Lemma \ref{smooththickness}, $\tau(K')\geq\tau(K)$ as desired.
\end{proof}
\section{Proof of Theorem \ref{main}}
\subsection{The linear case}
Yavicoli \cite[Proposition 20]{YSurvey} proved that any Cantor set satisfying $\tau(K)\geq 1$ contains a three term arithmetic progression (hereafter, ``3-AP'').  In fact, Yavicoli's proof actually shows that uncountably many such progressions exist.  More specifically, Yavicoli shows that there is a non-degenerate interval $I$ intersecting $K$ such that any point in $K\cap I$ may be taken as the middle point of a progression.  We start by giving another proof of the existence of $3$-APs in thick Cantor sets.  Our argument only produces progressions where the middle point is  a gap endpoint of the Cantor set, hence it only produces countably many such progressions.  The upside is that the proof is simpler, allowing it to be adapted to the non-linear setting.
\begin{thm}
\label{linear}
Let $K\subset \R$ be a Cantor set of thickness $\tau\geq 1$.  Then, $K$ contains a $3$-AP.
\end{thm}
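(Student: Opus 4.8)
The plan is to find a three-term progression whose middle term is the left endpoint of a bounded gap of $K$, so that the progression straddles that gap. First I would fix a bounded gap $G_0=(a_0,b_0)$ of $K$ — one exists because $K$ is totally disconnected with more than one point, so $\R\setminus K$ has a bounded component — write $\ell=b_0-a_0$, and note $a_0,b_0\in K$. I will search for $t>0$ with $a_0-t\in K$ and $a_0+t\in K$. Since $(a_0,b_0)$ is a gap, the condition $a_0+t\in K$ already forces $t\ge\ell$, so the three points $a_0-t\le a_0-\ell$, $a_0$, and $a_0+t\ge b_0$ are automatically distinct; it only remains to produce such a $t$.

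Next I would pass to the two bridges adjacent to $G_0$. Let $B_1=[w,a_0]$ be the bridge of $K$ at $a_0$ extending to the left and $B_2=[b_0,v_2]$ the bridge at $b_0$ extending to the right. By Definition~\ref{thickness} we have $|B_1|,|B_2|\ge\tau\ell\ge\ell$, and by Lemma~\ref{bridgethickness}, $\tau(K\cap B_1),\tau(K\cap B_2)\ge\tau$. A short maximality argument shows a bridge endpoint cannot lie in the interior of a gap of $K$, so $w,v_2\in K$ and hence $\conv(K\cap B_1)=B_1$, $\conv(K\cap B_2)=B_2$. I then form the affine images
\[
T_1=\{a_0-y:y\in K\cap B_1\},\qquad T_2=\{y-a_0:y\in K\cap B_2\},
\]
which have $\tau(T_1),\tau(T_2)\ge\tau\ge1$, with $\conv T_1=[0,|B_1|]$, $0\in T_1$, and $\conv T_2=[\ell,\ell+|B_2|]$, $\ell\in T_2$. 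The key observation is that $t\in T_1\cap T_2$ precisely when $a_0-t\in K\cap B_1$ and $a_0+t\in K\cap B_2$, and any such $t$ lies in $[0,|B_1|]\cap[\ell,\ell+|B_2|]\subseteq[\ell,\infty)$, hence $t\ge\ell>0$. So the whole problem reduces to showing $T_1\cap T_2\neq\emp$.

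For that I would invoke the Newhouse Gap Lemma: since $\tau(T_1)\tau(T_2)\ge\tau^2\ge1$, it suffices to check that neither of $T_1,T_2$ is contained in a single gap of the other. This is exactly where the bridge hypothesis pays off: every bounded gap of $T_1$ (resp. $T_2$) has length at most $\ell$, because it is the image of a gap of $K$ lying inside the bridge $B_1$ (resp. $B_2$). Using this together with $0\in T_1$, $\ell\in T_2$, $\operatorname{diam}T_1=|B_1|\ge\ell$, and $\operatorname{diam}T_2=|B_2|\ge\ell$, one checks directly that $T_2$ lies in no gap of $T_1$ — not an unbounded gap, since $\ell\in T_2$ lies in $[0,|B_1|]=\conv T_1$; and not a bounded gap, whose length at most $\ell$ is too small to contain the compact set $T_2$ of diameter at least $\ell$ — and symmetrically $T_1$ lies in no gap of $T_2$ (its unbounded gap $(-\infty,\ell)$ contains $0\in T_1$ but not $|B_1|\in T_1$, and $0$ lies outside every other gap of $T_2$). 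The Gap Lemma then gives $T_1\cap T_2\neq\emp$, and any $t$ in the intersection yields $\{a_0-t,a_0,a_0+t\}\subseteq K$ with $t>0$.

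Two loose ends remain. First, $T_1$ and $T_2$ are genuine Cantor sets unless $K\cap B_1$ or $K\cap B_2$ consists of exactly two points; but that can occur only when $\tau=1$, where it forces two consecutive gaps of $K$ of common length $\ell$ meeting at a point, and then one of $\{a_0-\ell,a_0,a_0+\ell\}$, $\{b_0-\ell,b_0,b_0+\ell\}$ is visibly contained in $K$. Second, I expect the genuinely delicate point to be precisely the verification of the linking hypothesis of the Gap Lemma in the previous paragraph: choosing the middle term to be a gap endpoint and working with the two adjacent bridges (rather than with all of $K$ on either side) is what both pins the common value $\ell$ into the relevant convex hulls and, via the bridge property, caps the lengths of all gaps in play, which is what makes the two Cantor sets impossible to separate. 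This same structure — straddle a gap, reflect the two adjacent bridge pieces, apply the Gap Lemma — is what I would then try to perturb in the non-linear setting using Lemma~\ref{smooththickness}.
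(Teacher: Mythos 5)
Your proposal is correct and follows the same overall strategy as the paper: fix a gap of $K$, reflect the Cantor set sitting in one adjacent bridge across the gap endpoint, and apply the Newhouse Gap Lemma (together with Lemma~\ref{bridgethickness}) to intersect it with the Cantor set in the other bridge. The one place where you genuinely diverge is in verifying the linking hypothesis of the Gap Lemma. The paper takes $G$ to be the \emph{largest} gap and assumes WLOG that the left bridge is at least as long as the right, which yields the chain $a\le c\le b$ and places specific points of each set into distinct gaps of the other. You instead start from an \emph{arbitrary} bounded gap of length $\ell$ and observe that, by the bridge property, every bounded gap of either $T_1$ or $T_2$ has length at most $\ell$, while $\operatorname{diam}T_1,\operatorname{diam}T_2\ge\tau\ell\ge\ell$; combined with $0\in T_1$, $\ell\in T_2$ and the locations of the convex hulls, this rules out containment in any (bounded or unbounded) gap without needing a special choice of $G$ or side. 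This buys you a mildly stronger conclusion — a $3$-AP centered at \emph{either} endpoint of \emph{every} bounded gap — whereas the paper's choice is exactly what makes the later perturbation argument in Theorem~\ref{main} clean, since Lemma~\ref{MVTlemma} is phrased for the largest gap with the left bridge the longer one. One small remark: your first ``loose end'' does not actually arise. If $K\cap B_1$ had an isolated point it would have to be the far endpoint $w$, but $w$ is either $\min K$ or the right endpoint of a gap, and in either case (since $K$ is perfect and no point can be both a left and a right gap endpoint) $w$ is a limit of $K$ from the right, hence a limit point of $K\cap B_1$; so $K\cap B_1$ and $K\cap B_2$ are always Cantor sets.
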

\begin{figure}
\label{abc_figure}
\begin{tikzpicture}
    \draw[ultra thick] (1,0)--(13,0);
    \draw[ultra thick, red] (3,0)--(6,0);
    \draw[ultra thick, blue] (7,0)--(9,0);

    \draw (7,-0.1) -- (7,0.1);
    \draw (6,-0.1) -- (6,0.1);
    \draw (9,-0.1) -- (9,0.1);
    \draw (3,-0.1) -- (3,0.1);
    \draw (8,-0.1) -- (8,0.1);
    \draw (11,-0.1) -- (11,0.1);

    \node[below] at (7,-0.1) {$0$};
    \node[below] at (6,-0.1) {$-a$};
    \node[below] at (9,-0.1) {$c$};
    \node[below] at (3,-0.1) {$-b$};
    \node[below] at (8,-0.1) {$a$};
    \node[below] at (11,-0.1) {$b$};
\end{tikzpicture}
\caption{The convex hulls of $K_1$ (red) and $K_2$ (blue).}
\end{figure}
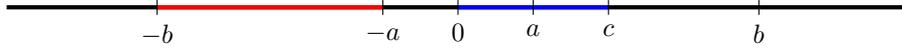
\begin{proof}
Let $G$ be the largest gap of $K$, and suppose without loss of generality that the left bridge of $G$ is at least as long as the right bridge.  After translating $K$ if necessary, we may suppose the largest gap of $K$ is $(-a,0)$ for some $a>0$.  Let $(-b,-a)$ and $(0,c)$ be the left and right bridges, respectively.  Let $K_1=K\cap (-b,-a)$ and $K_2=K\cap (0,c)$ (Figure \ref{abc_figure}).  If we can show $-K_1\cap K_2\neq\emp$, then $t\in -K_1\cap K_2$ would satisfy $-t\in K_1\subset K$ and $t\in K_2\subset K$, so $\{-t,0,t\}\subset K$ is a 3-AP contained in $K$.  Since $0\notin K_1$, we have $t\neq 0$, so this is a non-degenerate $3$-AP.  

To prove $K_1\cap K_2\neq\emp$, we use the Newhouse gap lemma.  By Lemma \ref{bridgethickness} we have $\tau(K_1)\geq 1$ and $\tau(K_2)\geq 1$, so $\tau(K_1)\tau(K_2)\geq 1$ also.  We already know $0<a$; if we can prove $a\leq c\leq b$, the gap lemma will apply (since $a,b\in -K_1$ would then have to lie in two different gaps of $K_2$, and $0,c\in K_2$ would lie in two different gaps of $-K_1$).  To prove these inequalities, first recall that we assumed the left bridge of $G$ is at least as long as the right bridge; in our notation, this is $b-a\geq c$, which implies $c\leq b$ immediately.  Next, since the right bridge has length $c$ and the gap has length $a$, we have $\tau\leq \frac{c}{a}$.  If $\tau\geq 1$, we therefore have $a\leq c$.
\end{proof}
\begin{rmk}
In the proof above, the 3-AP which is produced is of the form $\{-t,0,t\}$ because we assumed (without loss of generality) that the largest gap had right endpoint $0$, and that the left bridge was larger.  In general, the method produces a 3-AP $\{x,y,z\}$ where $y$ is the endpoint of the largest gap, on the side opposite the larger of the two bridges.
\end{rmk}
\subsection{The non-linear case}
\begin{lem}
\label{MVTlemmav3}
Let $a,b,c>0$ be numbers satisfying
\begin{equation}
\label{parameterconstraints}
a<c\leq b-a,
\end{equation}
and let $g$ be a function differentiable on $(0,c)$ which satisfies
\begin{equation}
\label{dvconstraint}
\frac{a}{c}<g'<\frac{c}{a}.
\end{equation}
Let $K$ be a Cantor setwith largest gap $(-a,0)$ and left and right bridges $[-b,-a]$ and $[0,c]$, respectively.  Finally, define
\[
K_L=K\cap [-b,-a] \hspace{.25in}\textup{and}\hspace{.25in} K_R=K\cap [0,c].
\]
then, the sets $-K_L$ and $g(K_R)$ are interleaved; that is, neither is contained in a single gap of the other.
\end{lem}
\begin{proof}
    The equation (\ref{dvconstraint}) together with the assumption $g(0)=0$ implies that $g$ is stricly increasing, so 
    \[
    \conv(g(K_R))=[0,g(c)].
    \]
    By the Mean Value Theorem, there exists $\xi\in (0,c)$ such that $g(c) = g'(\xi)c$. The lower bound in (\ref{dvconstraint}) then implies $g(c)>a$.  If $g(c)<b$ also, this completes the proof, as the intervals $\conv(-K_L)=[a,b]$ and $\conv(g(K_R))=[0,g(c)]$ have alternating endpoints.  If instead $g(c)>b$, then clearly $g(K_R)$ cannot lie in a single gap of $-K_L$.  It remains to show $-K_L$ does not lie in a single gap of $g(K_R)$ either.  Since $-K_L\subset \conv(g(K_R))$ in this case, we cannot have $-K_L$ in an unbounded gap of $g(K_R)$.  To show it does not lie in a single bounded gap, first note that any bounded gap of $g(K_R)$ is of the form $g(G)$, where $G$ is a bounded gap of $K_R$.  Because $\conv(K_R)$ is the left bridge (with respect to $K$) of the gap $(-a,0)$, we have $|G|\leq a$.  By the mean value theorem and the upper bounds in (\ref{dvconstraint}) and (\ref{parameterconstraints}), we have
    \[
    |g(G)|< \frac{c}{a}|G|\leq c\leq b-a=|\conv(-K_L)|.
    \]
   Therefore, $-K_L$ does not lie in $g(G)$.
\end{proof}

We are now ready to prove Theorem \ref{main}.
\begin{proof}[Proof of Theorem \ref{main}]

We want to choose $K'\subset K$ such that $K'$ satisfies Lemma \ref{MVTlemmav3}. To do this, first choose $0<\rho<1$ so that $\rho\tau\geq 1$, and let $\e>0$ be as given by Lemma \ref{smooththickness}.  Finally, let $\delta>0$ be such that the following inequalities hold for all $|x|<\delta$, where $g$ may denote either $f$ or $f^{-1}$:
\begin{align}
    \label{first} |g'(x)-g'(0)|<\frac{\e}{2\tau}&, \\[.1in]
     \label{second} \frac{1}{\tau}<g'(x)<\tau &.
 \end{align}
This is possible by continuity of $f$ and $f'$, and by the bounds on $f'(0)$ (and hence $(f^{-1})'(0)$) assumed in the statement of the theorem.  In particular, note that (\ref{first}) and (\ref{second}) together imply that the hypothesis of Lemma \ref{smooththickness} is met for both $g=f$ and $g=f^{-1}$, as
\[
    |g'(x)-g'(y)|\leq |g'(x)-g'(0)|+|g'(y)-g'(0)|
    <\frac{\e}{\tau} \\[.1in]
    <\e|g'(y)|,
\]
hence
\[
\left|\frac{g'(x)}{g'(y)}-1\right|<\e.
\]
By Lemma \ref{subset} we may choose $K'\subset K$ with $\tau(K')\geq \tau$ and $|\text{conv}\: K'|<\delta$.  Let $G$ be the largest gap of $K'$, and let $B_L,B_R$ be the left and right bridges of $G$.  There are two cases to consider, depending which bridge is larger.

First, suppose $|B_L|\geq |B_R|$.  After translation, we may asssume $G=(-a,0)$ for some $a>0$. Let $b,c>0$ be such that $B_L=[-b,-a]$ and $B_R=[0,c]$.  We claim that Lemma \ref{MVTlemmav3} applies with this choice of parameters.  To verify the hypotheses of Lemma \ref{MVTlemmav3}, first note that $|B_L|\geq |B_R|$ implies $b-a\geq c$, giving the upper bound of (\ref{parameterconstraints}).  Since $c=|B_R|$ and $a=|G|$, we have $\frac{c}{a}\geq \tau>1$ and $\frac{a}{c}\leq\frac{1}{\tau}<1$, giving both the lower bound of (\ref{parameterconstraints}) as well as both bounds of (\ref{dvconstraint}).  If $K_L=K'\cap [-b,-a]$ and $K_R=K'\cap [0,c]$, we conclude from Lemma \ref{MVTlemmav3} that $-K_L$ and $f^{-1}(K_R)$ are interleaved.  By Lemma \ref{bridgethickness}, 
\[
\tau(K_L)\geq\tau(K')>1.
\]
By Lemmas \ref{smooththickness} and \ref{bridgethickness}, 
\[
\tau(f^{-1}(K_R))>\rho\tau(K_R)\geq\rho\tau\geq 1.
\]
It follows from the Newhouse gap lemma (Theorem \ref{NGL}) that there exists some $t\in-K_1\cap f^{-1}(K_2)$.  This implies $-t\in K_1\subset K$ and $f(t)\in K_2\subset K$, so $\{-t,0,f(t)\}\subset K'$ as desired.

Finally, we consider the case $|B_R|\geq |B_L|$.  The set $-K'$ then fits the case considered above.  Running the same argument with $f$ in place of $f^{-1}$, we obtain a configuration $\{-s,0,f^{-1}(s)\}\subset -K'$ for some $s>0$.  Letting $t=f^{-1}(s)$ we have $\{-f(t),0,t\}\subset -K'$, or $\{-t,0,f(t)\}\subset K'$ as desired.

\end{proof}

\section{Necessity of derivative assumptions}
\label{Necessity}
\subsection{Goals for future work: The non-degenerate case}
Our proof of Theorem \ref{main} was based on perturbing our proof of Theorem \ref{linear}.  The bounds on the derivative of $f$ seem likely to be an artifact of that approach.  We conjecture the following.
\begin{conj}
    Let $K\subset \R$ be a Cantor set satisfying $\tau(K)>1$, and let $f$ be a continuously differentiable function satisfying $f(0)=0,f'(0)> 0$.  Then, there exist $x\in\R,t>0$ such that
    \[
    \{x-t,x,x+f(t)\}\subset K.
    \]
\end{conj}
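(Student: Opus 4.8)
The plan is to reduce the conjecture to a robust version of the linear case and then perturb. First, a configuration $\{x-t,x,x+f(t)\}\subset K$ with $t>0$ small becomes, after reflecting $K$ about $x$ and writing $s=f(t)$, a configuration $\{x-s,\,x,\,x+f^{-1}(s)\}$ of the same type for $f^{-1}$ in the reflected set; since reflection preserves thickness and $(f^{-1})'(0)=1/f'(0)$, we may assume $0<f'(0)\le 1$, and since a bounded neighbourhood of $1$ is already covered by Theorem \ref{main}, the genuinely new regime is $f'(0)$ small. Next, exactly as in the proof of Theorem \ref{main}, fix $0<\rho<1$ with $\rho\tau>1$, take the constant $\e$ from Lemma \ref{smooththickness}, and use Lemma \ref{subset} to pass to $K'\subset K$ with $\tau(K')\ge\tau$ whose convex hull is short enough that $f$ and $f^{-1}$ are monotone and $C^1$ near $\conv K'$ with $|g'(u)/g'(v)-1|<\e$ there, and such that $f$ differs from its linearization $\ell(t)=f'(0)\,t$ by at most an arbitrarily small multiple of $|t|$ on $\conv K'$. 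After this localization, Lemma \ref{smooththickness} applies to $f$ and $f^{-1}$ on any bridge of $K'$, and $f$ is an arbitrarily small uniform perturbation of the affine map $\ell$ at the relevant scale.

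The core is to produce a \emph{robust} proof of the linear statement: a single Newhouse Gap Lemma application yielding $\{x-t,\,x,\,x+\ell(t)\}\subset K'$ for some $x\in K'$ and $t>0$, whose inputs are only (i) a product of two thicknesses being $\ge 1$, one factor $\ge\tau$ and the other $\ge\rho\tau$, and (ii) a ``linked convex hulls'' condition---both of which are open. Granting such an argument, replacing $\ell$ by the true $f$ changes the pushed-forward Cantor set's thickness only through Lemma \ref{smooththickness}, so it stays $>\rho\tau\ge1$, and moves the relevant convex hulls by an amount tending to $0$ with $|\conv K'|$, so the linking persists; the Gap Lemma then gives $t>0$ with $\{-t,0,f(t)\}\subset K'$ after translating so $x=0$. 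The catch is that the gap-endpoint construction behind Theorem \ref{linear} and Lemma \ref{MVTlemma} is not flexible enough on its own: there are self-similar Cantor sets of thickness strictly larger than $1$ all of whose gaps have the same bridge proportions---for example the attractor of $u\mapsto\tfrac15 u$ and $u\mapsto\tfrac{7}{10}u+\tfrac{3}{10}$ has thickness $2$ and every gap of shape $(1,2,7)$---and for such a set the middle point of a gap-endpoint configuration can only realize slopes in a fixed bounded interval, missing for instance $f'(0)=\tfrac25$. One must instead build the configuration on the more flexible footing of Yavicoli \cite{YSurvey} and Sandberg-Clark and Taylor \cite{SCT}, where the middle point ranges over a whole subinterval meeting $K'$, and rerun that argument with the variable weight $\tfrac{t}{t+f(t)}$ in place of a constant.

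The step I expect to be the main obstacle is exactly this robustness check for the flexible linear argument: one must verify that both the subinterval of admissible middle points and the auxiliary Gap Lemma step that selects $t$ for a given middle point survive the replacement of the affine $\ell$ by a genuinely non-linear $f$, the loss of scale invariance being precisely what makes this delicate. One possible way around it would be to argue in the spirit of Simon and Taylor \cite{ST20}, studying the map $\Phi(u,v)=u+f(u-v)$ on $(K'\times K')\cap\{v<u\}$, whose underlying product of thicknesses exceeds $1$, and showing that its image contains, near a prescribed location, a set thick enough to meet $K'$ by the Gap Lemma. Either route must confront the same point: at small scales $f$ rescales lengths by a non-constant factor, so the clean gap-endpoint bookkeeping of Theorem \ref{main} has to be traded for a construction that tolerates that distortion.
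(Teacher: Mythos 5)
This statement is not a theorem of the paper but an explicitly open conjecture; the paper does not prove it, and Section~\ref{Necessity} is devoted to explaining why the authors' own method (perturbing the gap-endpoint argument of Theorem~\ref{linear}) cannot reach the full range $f'(0)>0$, together with Theorem~\ref{squareexample} showing the gap-endpoint construction can even fail outright in the degenerate case $f'(0)=0$. There is therefore no proof in the paper to compare against.

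Your submission is likewise not a proof: it is a strategy sketch that openly flags its central missing step. The reduction to $f'(0)\le 1$ via reflection and replacing $f$ by $f^{-1}$ is correct and worth stating. Your observation that the gap-endpoint construction behind Theorem~\ref{linear} and Lemma~\ref{MVTlemma} cannot by itself realize all slopes is also correct, and the self-similar example you give (attractor of $u\mapsto u/5$ and $u\mapsto (7u+3)/10$, with every gap having bridge--gap--bridge ratios $2:1:7$) is a genuine witness: scale-invariance forces the admissible slope window at the gap endpoint to be the same fixed bounded interval at every scale, so no amount of zooming in helps. That is precisely why the paper, too, abandons this route in Section~\ref{Necessity}. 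Where you write that the argument must instead ``run on the flexible footing'' of Yavicoli and Sandberg--Clark--Taylor, or alternatively study the map $\Phi(u,v)=u+f(u-v)$ on $K'\times K'$ \`a la Simon--Taylor, the second suggestion is essentially the program the paper itself proposes: Section~\ref{Necessity} reduces the conjecture to showing $K\cap F(K,K)\neq\emp$ for $F(x,y)=y+f(y-x)$ and poses the problem of understanding the image of thick Cantor sets under such two-variable non-linear maps as future work. You have identified the right obstruction and the same candidate replacement; but the crucial robustness claim --- that the linked-convex-hulls and thickness inputs to the Gap Lemma survive the replacement of the affine $\ell$ by a genuinely non-linear $f$ uniformly over a whole interval of middle points --- is exactly the part neither you nor the paper establishes, so the conjecture remains open. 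If you intend to pursue this, the concrete missing lemma is an analogue of the sumset-is-an-interval fact used in \cite{SCT}, but for $F(K,K)$ with $F$ as above, with quantitative control on the endpoints of the resulting interval in terms of $\conv K$ and the modulus of continuity of $f'$.
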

In \cite{SCT}, Sandberg-Clark and Taylor prove exactly this conjecture in the linear case; that is, for functions $f(t)=mt$ where $m>0$.  Their proof involves the observation that $(a,b,c)=(x-t,x,x+mt)$ for some $x,t$ if and only if
\[
b=\frac{m}{m+1}a+\frac{1}{m+1}c.
\]
This means that finding such a progression amounts to proving
\[
K\cap\left(\frac{m}{m+1}K+\frac{1}{m+1}K\right)\neq\emp.
\]
Using the Newhouse Gap Lemma, one can show that the sumset of thick Cantor sets is an interval, and this interval can be described explicitly in terms of the convex hulls of the sets being summed.  With an explicit description, it is not difficult to show $K$ intersects this interval, which completes the proof.

To extend this idea to the non-linear setting, the analogous starting observation is that $(a,b,c)=(x-t,x,x+f(t))$ for some $x,t$ if and only if
\[
c=b+f(b-a).
\]
Defining a new function $F(x,y)=y+f(y-x)$, in order to find the desired configuration, it is enough to show
\[
K\cap F(K,K)\neq\emp.
\]
Thus, the non-linear function $F$ plays the role that the sumset played in the linear setting.  The problem of describing the image of a thick Cantor set under such a function is one which should be explored in further work.
\subsection{A critical example in the degenerate case}
While the exact bounds on $f'$ are likely an artifact of the proof, the assumption that $f'(0)\neq 0$ is likely more fundamental.  Recall that our proofs in both the linear and non-linear setting (Theorems \ref{linear} and \ref{main}), as well as the proof of the linear result in \cite{SCT}, show that one may find a configuration $\{x-t,x,x+f(t)\}$ where $x$ is one of the two endpoints of the largest gap of $K$.  We conclude by giving a simple example of a compact set $K$ which avoids these patterns for $f(t)=t^2$.  This does not show that the assumption $f'(0)\neq 0$ is strictly necessary for the conclusion of Theorem \ref{main} to hold, since the configuration $\{x-t,x,x+t^2\}$ may still occur for some $x\in K$ other than one of those two points.  However, it does show that the proof technique cannot be extended to this case.  We also note that while our construction below is a finite union of intervals rather than a Cantor set, it can easily be reduced further to a Cantor set with the same thickness if desired.

\begin{thm}
\label{squareexample}
    There exists an absolute constant $\tau_0>1$ with the following property.  For any $\tau\in (1,\tau_0)$, there exists a compact set $K$ of thickness $\tau$ which does not contain a configuration of the form $\{x-t,x,x+t^2\}$ where $t>0$ and $x$ is one of the two endpoints of the largest gap of $K$.
\end{thm}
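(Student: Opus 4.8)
The plan is to build $K$ explicitly as a union of three closed intervals, with the largest gap placed in the middle so that the two endpoints of the largest gap are precisely the points we must protect. A first attempt with only two intervals $[0,p]\cup[q,M]$ does not work: forbidding $\{x-t,x,x+t^2\}$ at the left endpoint amounts to $g>p^2$ and at the right endpoint to $M-q<g^2$ (here $g=q-p$), but the thickness constraints $p\ge\tau g$ and $M-q\ge\tau g$ then force $g<\tau^{-2}$ and $g>\tau$ at once, impossible for $\tau>1$. So instead I would take
\[
K=[0,a]\cup[b,p]\cup[q,M],\qquad 0<a<b<p<q<M,
\]
with bounded gaps $(a,b)$ and $(p,q)$, arranging $g:=q-p>b-a$ so that $(p,q)$ is the unique largest gap, with endpoints $p$ and $q$.

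The construction is driven by two observations. For the right endpoint $x=q$: since $(p,q)$ is the largest gap, $q-t\in K$ with $t>0$ forces $q-t\le p$, i.e. $t\ge g$, while $q+t^2\in K$ forces $q+t^2\le M$, i.e. $t\le\sqrt{M-q}$; if $M-q<g^2$ these two ranges of $t$ are disjoint, so no such configuration exists. For the left endpoint $x=p$: now $p+t^2>p$ must land in $[q,M]$, so the only dangerous values are $t\in[\sqrt g,\sqrt{g+(M-q)}]$, and it therefore suffices to place $(a,b)$ so that the interval $[\,p-\sqrt{g+(M-q)},\,p-\sqrt g\,]$ lies strictly inside $(a,b)$, making $p-t\notin K$ for every such $t$. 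Since the first observation requires $M-q<g^2$ while thickness at $q$ requires $M-q\ge\tau g$, one is pushed to take $g$ large; concretely I would set $p=\tau g$, $M-q=\tau g$ (hence $q=(1+\tau)g$, $M=(1+2\tau)g$), $a=p-\sqrt{(1+\tau)g}-\varepsilon$, $b=p-\sqrt g+\varepsilon$ for small $\varepsilon>0$, and choose $g$ large enough that $a>0$ and $b-a<g$; the two configuration-avoidance conditions then hold by design.

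The delicate point is verifying $\tau(K)=\tau$. Only the endpoints $a,b,p,q$ matter, and since every bounded gap has length $\le g$ the four bridges can be read off directly: the bridge at $p$ is $[0,p]$ and at $q$ is $[q,M]$, giving ratios $p/g=\tau$ and $(M-q)/g=\tau$; the bridge at $b$ cannot cross the larger gap $(p,q)$, so it is $[b,p]$ with ratio $(p-b)/(b-a)$; the bridge at $a$ is $[0,a]$ with ratio $a/(b-a)$. For $g$ large the ratio $a/(b-a)$ exceeds $\tau$, so the binding requirement is $(p-b)/(b-a)\ge\tau$, i.e. (letting $\varepsilon\to0$) $\sqrt g\ge\tau\bigl(\sqrt{(1+\tau)g}-\sqrt g\bigr)$, equivalently $\tau(\sqrt{1+\tau}-1)\le1$. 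Using $1+\phi=\phi^2$ one sees this is an equality at $\tau=\phi:=\tfrac{1+\sqrt5}{2}$ and strict for $\tau<\phi$; hence for any $\tau\in(1,\phi)$ one can take $\varepsilon>0$ small and $g$ large and obtain $\tau(K)=\tau$ exactly. So $\tau_0=\phi$ works.

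The main obstacle is exactly this balancing act, and it is where the golden ratio appears: forbidding the pattern at $q$ forces the largest gap to be long, which forces the auxiliary gap $(a,b)$ needed to forbid the pattern at $p$, and keeping the flanking bridge $[b,p]$ long enough relative to $b-a$ while also keeping $b-a<g$ (so that $(p,q)$ stays the unique largest gap) is what caps the admissible thickness. The remaining work — computing the four bridges, checking $a>0$, and checking that the two excluded ranges of $t$ really cover every $t>0$ for which $\{x-t,x,x+t^2\}$ could have two points already in $K$ — is routine.
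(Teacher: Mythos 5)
Your proposal is correct, and it takes a genuinely different route from the paper. The paper builds a five-interval set $K=I_1\cup\cdots\cup I_5$ at a \emph{small} scale: the largest gap $G_2=(-\e,0)$ has tiny length $\e$, and the right piece of $K$ sits inside $[0,\tau\e]$. The left-endpoint case $x=-\e$ is then trivial because $t^2<\e$ for every $t$ that can arise, so $-\e+t^2$ falls back into $G_2$; the right-endpoint case $x=0$ is handled by carefully placing two auxiliary gaps $G_3,G_4$ at the $O(\e^2)$ scale so that the squares of $-I_2$ and $-I_1$ land in $G_3$ and $G_4$ respectively. Your construction flips the scaling: the largest gap $(p,q)$ has \emph{large} length $g$, which makes the right-endpoint case $x=q$ trivial (since $q-t\in K$ forces $t\ge g$, while $q+t^2\in K$ forces $t^2\le M-q=\tau g<g^2$ once $g>\tau$), and a single auxiliary gap $(a,b)$ handles the left-endpoint case $x=p$. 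Both constructions exploit the scale-dependence of $t\mapsto t^2$, but at opposite ends: the paper uses $t^2\ll t$ at small scale, yours uses $t^2\gg t$ at large scale. Your version needs only three intervals and yields an explicit threshold $\tau_0=\tfrac{1+\sqrt5}{2}$ (one checks $\tau(\sqrt{1+\tau}-1)<1$ is equivalent to $\tau^2-\tau-1<0$, i.e.\ $\tau<\phi$), whereas the paper leaves $\tau_0$ as an unspecified constant determined by finitely many polynomial roots. Your bridge bookkeeping and the verification that the two excluded $t$-ranges are disjoint from the admissible ones are both sound.
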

\begin{figure}
\begin{tikzpicture}
    \draw[ultra thick] (-8,0)--(-6,0);
    \draw[ultra thick] (-4,0)--(-2,0);
    \draw[ultra thick] (0,0)--(.5,0);
    \draw[ultra thick] (.95,0)--(1.45,0);
    \draw[ultra thick] (2,0)--(6,0);

    \node[below] at (-7,0) {$I_1$};
    \node[below] at (-5,0) {$G_1$};
    \node[below] at (-3,0) {$I_2$};
    \node[below] at (-1,0) {$G_2$};
    \node[below] at (.25,0) {$I_3$};
    \node[below] at (.725,0) {$G_3$};
    \node[below] at (1.225,0) {$I_4$};
    \node[below] at (1.725,0) {$G_4$};
    \node[below] at (4,0) {$I_5$};

    \node[above] at (-7,.5) {$B_1^L$};
    \node[above] at (-3,.5) {$B_1^R$};
    \node[above] at (.25,.5) {$B_3^L$};
    \node[above] at (1.225,.5) {$B_3^R$};

    \node[above] at (.725,1.2) {$B_4^L$};
    \node[above] at (4,1.2) {$B_4^R$};

    \node[above] at (-5,1.9) {$B_2^L$};
    \node[above] at (3,1.9) {$B_2^R$};

    \draw [decorate,
    decoration = {brace}] (-8,.5) --  (-6,.5);
    \draw [decorate,
    decoration = {brace}] (-4,.5) --  (-2,.5);
    \draw [decorate,
    decoration = {brace}] (0,.5) --  (.5,.5);
    \draw [decorate,
    decoration = {brace}] (.95,.5) --  (1.45,.5);

    \draw [decorate,
    decoration = {brace}] (0,1.2) --  (1.45,1.2);
    \draw [decorate,
    decoration = {brace}] (2,1.2) --  (6,1.2);

    \draw [decorate,
    decoration = {brace}] (-8,1.9) --  (-2,1.9);
    \draw [decorate,
    decoration = {brace}] (0,1.9) --  (6,1.9);
\end{tikzpicture}
\caption{The set $K$ in Theorem \ref{squareexample} with bridges labeled.}
\label{square_figure}
\end{figure}
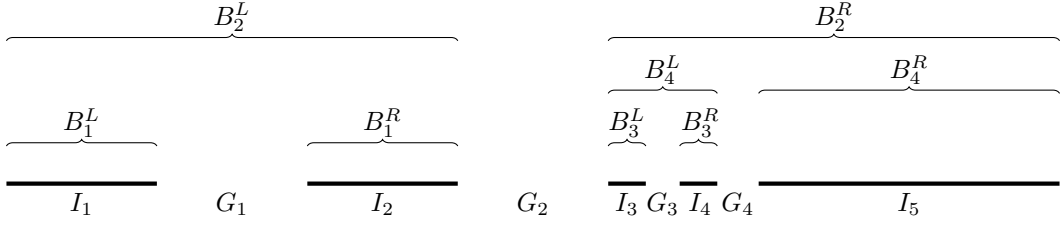
\begin{proof}
    Let $\tau>1,\e>0,0<c<1$ be parameters to be determined later.  Also let $\alpha=\frac{1}{2\tau+1}$ and $\beta=\frac{\tau}{2\tau +1}$; these figures represent the proportion of an interval which must be removed from the center to obtain a set of thickness $\tau$, and the proportion left on each side, respectively.  For future reference, we note the relations $2\beta+\alpha=1$ and $\beta/\alpha=\tau$.  Let
    \[
    K=I_1\cup I_2\cup I_3\cup I_4\cup I_5,
    \]
    where
    \begin{align*}
    I_1&= [-(1+\tau)\e,-(1+\beta\tau+\alpha\tau)\e] \\[.1in]
    I_2&= [-(1+\beta\tau)\e,-\e] \\[.1in]
    I_3&= [0,c\e^2] \\[.1in]
    I_4&= [(1+\beta\tau)^2c^{-1}\e^2,(1+\beta\tau+\alpha\tau)^2c\e^2] \\[.1in]
    I_5&= [(1+\tau)^2c^{-1}\e^2,\tau\e].
    \end{align*}
    The set $K$ is shown in Figure \ref{square_figure}.  For $i=1,2,3,4$, let $G_i$ be the gap between $B_i$ and $B_{i+1}$.  The largest gap of $K$ is $G_2=(-\e,0)$, so our claim is that $K$ does not contain any configuration of the form $\{x-t,x,x+t^2\}$ for $t>0$ and $x\in \{-\e,0\}$.  By construction, $t\in I_1$ implies $t^2\in G_4$ and $t\in I_2$ implies $ t^2\in G_3$.  Therefore, $K$ does not contain any configuration of the form $\{-t,0,t^2\}$ for $t>0$.  Also, if $\e$ is sufficiently small, then $t^2<\e$ for any $t\in K$, so $K$ does not contain configurations of the form $\{-\e-t,-\e,-\e+t^2\}$ for $t>0$.  It remains to show $\tau(K)=\tau$ when $\tau$ is sufficiently close to $1$.  The following table lists the intervals left to right, along with their lengths (the notation $\approx$ denotes the limit as $c\to 1$; the rows in which this symbol does not appear are independent of $c$).  The last column gives the lenghts in the case $\tau=1$ (note that in this case, $\al=\be=\frac{1}{3}$).
    \begin{center}
    \begin{tabular}{c|c|c}
    Interval & Length ($c\to 1$) & Length ($c\to 1,\tau=1$) \\
    \hline
    $I_1$ & $\beta\tau\e$ & $\e/3$ \\[.1in]
    $G_1$ & $\alpha\tau\e$ & $\e/3$ \\[.1in]
    $I_2$ & $\beta\tau\e$ & $\e/3$ \\[.1in]
    $G_2$ & $\e$ & $\e$ \\[.1in]
    $I_3$ & $\approx\e^2$ & $\approx \e^2$ \\[.1in]
    $G_3$ & $\approx (\beta^2\tau^2+2\beta\tau)\e^2$ & $\approx 7\e^2/9$ \\[.1in]
    $I_4$ & $\approx \al\tau(2+\tau)\e^2$ & $\approx\e^2$ \\[.1in]
    $G_4$ & $\approx\be\tau(2+3\be+2\al)\e^2$ & $\approx 11\e^2/9$ \\[.1in]
    $I_5$ & $\approx \tau\e-(1+\tau)^2\e^2$ & $\approx \e-4\e^2$
    \end{tabular}
    \end{center}
Let $B_i^L,B_i^R$ be the left and right bridge, respectively, of the gap $G_i$.  By continuity and the calculations in the above table, if $\tau$ and $c$ are sufficiently close to $1$, the bridges are as shown in Figure \ref{square_figure}.
For each $i$ and each choice of $L$ or $R$, when $\tau$ and $c$ are sufficiently close to $1$, we have
\[
\frac{|B_i^{L/R}|}{|G_i|}\geq \tau,
\]
with equality holding in several cases.  However, in all cases where the length depends on $c$, the inequality is strict.  Therefore, for any $\tau>1$ sufficiently close to $1$, there is a choice of $c$ which gives $\tau(K)=\tau$.  The threshold for $\tau$ depends only on the minimum of a finite number of zeros of polynomials, so there is an absolute constant $\tau_0$ with the desired property.
\end{proof}

\bibliographystyle{plain}
\bibliography{references}

\end{document}